\documentclass[twoside]{amsart}
\usepackage{amssymb,amsmath,amsopn}
\usepackage[dvips]{graphicx}   %To insert ps figures
\usepackage{color,epsfig}      %To insert ps figures 
\usepackage{latexsym}

\usepackage{amsmath,amssymb,enumerate}
\usepackage[margin=2.5cm,nohead]{geometry}
\usepackage[utf8]{inputenc}
\usepackage[english]{babel}
\usepackage{latexsym}
\usepackage{amsfonts}
\usepackage{amssymb}
\usepackage{amsthm}
\usepackage{setspace}

\usepackage{lmodern}
\usepackage{indentfirst}
\usepackage{graphicx}
\usepackage{geometry}

\headsep=25pt

\newtheorem{thm}{Theorem}
\newtheorem{prop}{Proposition}
\newtheorem{lem}[prop]{Lemma}
\theoremstyle{definition}
 
\newtheorem{rem}[prop]{Remark}

\newcommand{\norm}[1]{\left\lVert#1\right\rVert}
\newcommand{\vol}[1]{\mathrm{Vol}\left(#1\right)}

\renewcommand{\Re}{\mathbb R}
\newcommand{\Red}{\Re^d}
\DeclareMathOperator{\intof}{int}
\DeclareMathOperator{\conv}{conv}
\DeclareMathOperator{\bd}{bd}

%\def\marci#1{{\sc Marci: }{\marrow\sf #1}}
%\def\viki#1{{\sc Marci: }{\marrow\sf #1}}

%%% ----------------------------------------------------------------------
%\textheight=600pt
%\textwidth=375pt
\parskip=4pt
\begin{document}

\title[] {Bounds on Convex Bodies in Pairwise Intersecting Minkowski Arrangement of Order $\mu$}

\author[Vikt\'oria F\"oldv\'ari]{Vikt\'oria F\"oldv\'ari}     
%\author[M\'arton Nasz\'odi]{M\'arton Nasz\'odi}                                            

\address{Vikt\'oria F\"oldv\'ari,
Institute of Mathematics, E\"otv\"os Lor\'and University, Budapest, Hungary,
}
\email{foldvari@math.elte.hu}
%\address{M\'arton Nasz\'odi, 
%Chair of Combinatorial Geometry, EPFL, Lausanne, Switzerland
%\and
%Institute of Mathematics, E\"otv\"os Lor\'and University, Budapest, Hungary
%}
%\email{marton.naszodi@math.elte.hu}
%\thanks{$*$ Partially supported by a post-doctoral fellowship at EPFL,
%a Hung. Nat. Sci. Found. (OTKA) grant PD104744, and by the J\'anos Bolyai 
%Research Scholarship of the Hungarian 
%Academy of Sciences.}
                                 
\subjclass[2010]{52C15, 51N20, 52A37}
\keywords{Minkowski arrangement, Bezdek--Pach Conjecture, homothets, translates, packing}

%\date{}
                                                                                
\begin{abstract}
The $\mu$-kernel of an $o$-symmetric convex body is obtained by shrinking the body about its center by a factor of $\mu$.
  As a generalization of pairwise intersecting Minkowski arrangement of $o$-symmetric convex bodies, we can define the pairwise intersecting Minkowski arrangement of order $\mu$. Here, the homothetic copies of an $o$-symmetric convex body are so that none of their interiors intersect the $\mu$-kernel of any other.
  We give general upper and lower bounds on the cardinality of such arrangements, and study two special cases:
  For $d$-dimensional translates in classical pairwise intersecting Minkowski arrangement we prove that the sharp upper bound
  is $3^d$.
  The case $\mu=1$ is the Bezdek--Pach Conjecture, which asserts that the maximum number of pairwise 
  touching positive homothetic copies of a convex body in $\Re^d$ is $2^d$. We 
  verify the conjecture on the plane, that is, when $d=2$. Indeed, we show that 
  the number in question is four for any planar convex body.
\end{abstract}
\maketitle

\section{Introduction}

A \emph{positive homothetic copy} of a convex body (i.e. a compact convex set 
with non-empty interior) $K$ in Euclidean $d$-space $\Red$ is a set of the form 
$\lambda K+t$ where $\lambda >0$ and $t\in\Red$. 
Two sets in $\mathbb{R}^d$ are said to \emph{touch} each other if they intersect but 
their interiors are disjoint.

The following notion was introduced by Fejes Tóth \cite{FT65}: Pairwise intersecting homothets of a centrally symmetric convex body in the $d$-dimensional Euclidean
space form a \emph{Minkowski arrangement} if none of them contains the center of any other in its interior.
In this paper, we only consider Minkowski arrangements that are pairwise intersecting.

Polyanskii \cite{P17} recently proved that such a family of convex bodies has at most $3^{d+1}$
members. This result was improved by Naszódi and Swanepoel \cite{NS17} showing an upper bound of $2\cdot 3^d$.
It is natural to conjecture that the maximum number of elements is $3^d$.

We prove the following upper bound on the cardinality of
a family containing translates of a centrally symmetric convex
body in pairwise intersecting Minkowski arrangement in Section \ref{sec:Minkowski}:

\begin{thm}\label{thm:Minkowski3d}
In $\mathbb{R}^d$, a pairwise intersecting Minkowski arrangement consisting of translates of a centrally symmetric convex body $K$ contains at most $3^d$ elements. This bound is sharp, equality holds if and only if $K$ is a $d$-dimensional parallelotope.
\end{thm}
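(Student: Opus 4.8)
The plan is to pass to the gauge norm of $K$ and then run a packing estimate sharpened by the Brunn--Minkowski inequality. Put the center of $K$ at the origin and let $\|x\|$ denote the norm whose unit ball is $K$. If the translates are $K+t_1,\dots,K+t_n$, then $(K+t_i)\cap(K+t_j)\neq\emptyset$ exactly when $t_i-t_j\in K-K=2K$, i.e. $\|t_i-t_j\|\le 2$, while the Minkowski condition $t_j\notin\intof(K+t_i)$ says precisely $\|t_i-t_j\|\ge 1$. Thus the theorem is equivalent to the assertion that a set of points in $\Red$ whose pairwise distances in an arbitrary norm (with symmetric unit ball $K$) all lie in $[1,2]$ has at most $3^d$ elements, with equality only for parallelotopes.

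For the upper bound, set $C=\conv\{t_1,\dots,t_n\}$. First I would note that the bodies $t_i+\tfrac12K$ have pairwise disjoint interiors: an overlap would give $t_i-t_j\in\intof(\tfrac12K+\tfrac12K)=\intof K$, contradicting $\|t_i-t_j\|\ge 1$. They all lie in $C+\tfrac12K$. Next I would estimate this region's diameter in the norm: $\operatorname{diam}_K C\le 2$ (since $t_i-t_j\in 2K$ and $2K$ is convex), $\operatorname{diam}_K(\tfrac12K)=1$, so $\operatorname{diam}_K\big(C+\tfrac12K\big)\le 3$. The quantitative heart of the argument is the isodiametric (Bieberbach-type) inequality in a Minkowski space: for any convex body $L$ one has $L-L\subseteq(\operatorname{diam}_K L)\,K$, while Brunn--Minkowski gives $\vol{L-L}\ge 2^{d}\vol{L}$; together these force $\vol{L}\le\big(\tfrac{\operatorname{diam}_K L}{2}\big)^d\vol{K}$. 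Applying this to $L=C+\tfrac12K$ yields $\vol{C+\tfrac12K}\le(\tfrac32)^d\vol{K}$, and combining with the disjointness bound $n\,\vol{\tfrac12K}\le\vol{C+\tfrac12K}$ together with $\vol{\tfrac12K}=2^{-d}\vol{K}$ gives $n\le 3^d$.

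For the equality case, assume $n=3^d$, so every inequality above is tight. Equality in Brunn--Minkowski forces $C+\tfrac12K$ to be a translate of a positive dilate of its reflection, and equal volumes make the dilation factor $1$, so $C+\tfrac12K$ is centrally symmetric; then $(C+\tfrac12K)-(C+\tfrac12K)\subseteq 3K$ has full volume $3^d\vol{K}$, hence equals $3K$, so $C+\tfrac12K$ is a translate of $\tfrac32K$. Equality in the packing bound means the $3^d$ translates $t_i+\tfrac12K$ tile $C+\tfrac12K$; after rescaling, $3K$ is tiled by $3^d$ translates of $K$. I would then invoke the classical fact that a convex body which tiles a proper dilate of itself by translations must be a parallelotope — for $d=2$ this follows by a direct count of, along each of the six edges of $3K$, the translates that must carry a sub-edge in that direction — to conclude that $K$ is a parallelotope. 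Conversely, a linear image reduces to $K=[-1,1]^d$, and then the $3^d$ points $\{0,1,2\}^d$ have all pairwise $\ell_\infty$-distances in $[1,2]$; since linear maps preserve central symmetry, translates, intersection and the Minkowski property, every parallelotope attains $3^d$.

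The reformulation and the disjointness/containment bookkeeping are routine, and the counting that yields $3^d$ is clean once the Minkowski-space Bieberbach inequality is in hand. The genuine obstacle is the equality discussion: upgrading "$3K$ is translation-tiled by $3^d$ copies of $K$" to "$K$ is a parallelotope" rigorously in every dimension, and carefully handling the degenerate case where $C$ is lower-dimensional (the body $C+\tfrac12K$ is still full-dimensional, so the argument survives, but this needs to be stated with care). I expect this final step to absorb most of the work.
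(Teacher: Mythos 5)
Your proposal is correct and follows essentially the same route as the paper: pass to the norm with unit ball $K$, pack the disjoint bodies $t_i+\tfrac12K$ into the convex hull region of diameter at most $3$, apply the Minkowski-space isodiametric inequality to get $n\le 3^d$, and settle equality via the fact (Groemer's theorem, which the paper cites) that a convex body translation-tiling a dilate of itself is a parallelotope. The only differences are that you additionally derive the isodiametric inequality from Brunn--Minkowski and spell out the equality analysis and the $\{0,1,2\}^d$ construction in more detail than the paper does.
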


We show a construction for arbitrary centrally symmetric convex body that gives a linear lower bound on the cardinality of maximal pairwise intersecting Minkowski arrangements of translates. Although from Theorem 3 of \cite{NPS17} we can deduce the existence of an exponential lower bound, we now give a simple and deterministic construction.

\begin{prop}\label{thm:lowerbound}
For a centrally symmetric convex body $K$ in $\mathbb{R}^d$ ($d\geq 2$), a maximum cardinality set consisting of translates of $K$ in pairwise intersecting Minkowski arrangement has at least $2d+3$ elements.
\end{prop}

We introduce some generalizations of the problem based on an idea of Böröczky and Szabó \cite{BSZ04}:
For $0\leq \mu \leq 1$ they defined the \emph{$\mu$-kernel} of an $o$-symmetric convex body $K$ as $\mu K$.
%****
%The notion of pairwise intersecting Minkowski arrangement can be generalized in the following way:

%\begin{defn}
%For $K$ centrally symmetric convex body and $0\leq\mu\leq 1$ the translate of $\mu K$  concentric to $K$ is called the \emph{$\mu$-kernel of $K$}.
%\end{defn}

%\begin{defn}
%Homothets of a centrally symmetric convex body are said to be in \emph{pairwise intersecting Minkowski arrangement of order $\mu$}
%if they are pairwise intersecting but none of their interiors intersects the $\mu$-kernel of any other body.
%\end{defn}

%****

Using this notion, for homothets of an $o$-symmetric convex body we can consider a
\emph{pairwise intersecting Minkowski arrangement of order $\mu$}, where the homothets are pairwise intersecting but none of their interiors intersect
the center of any other. 

We prove an upper bound on the cardinality of such an arrangement, then, for centrally symmetric convex bodies we verify the existence of an exponential lower bound.

\begin{thm}\label{thm:muMink}
In $\mathbb{R}^d$, a pairwise intersecting Minkowski arrangement of order $\mu$ consisting of translates of a centrally symmetric convex body $K$ contains at most $ \left( 1 + \frac{2}{1+\mu}  \right)^d $ elements.
\end{thm}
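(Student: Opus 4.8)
The plan is to reduce the statement to a packing estimate for the centres of the translates, and then to control the one volume that shows up by a Brunn--Minkowski type argument. Normalise so that $K$ is centred at the origin, write $\norm{\cdot}_K$ for the gauge (norm) of $K$, and denote the members of the arrangement by $K+t_1,\dots,K+t_n$. First I would translate the two defining conditions into inequalities on the centres. Pairwise intersection $(K+t_i)\cap(K+t_j)\neq\emptyset$ is equivalent, using central symmetry, to $t_i-t_j\in K-K=2K$, i.e.\ $\norm{t_i-t_j}_K\le 2$. The order-$\mu$ condition, that $\intof(K+t_i)$ misses the $\mu$-kernel $\mu K+t_j$, unfolds to $t_i-t_j\notin \mu K+\intof(K)=\intof\bigl((1+\mu)K\bigr)$, that is $\norm{t_i-t_j}_K\ge 1+\mu$. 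So the centres satisfy
\[
  1+\mu\le \norm{t_i-t_j}_K\le 2\qquad(i\neq j).
\]

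Next, set $\lambda=\tfrac{1+\mu}{2}$ and consider the homothets $\lambda K+t_i$. Since $\norm{t_i-t_j}_K\ge 2\lambda$, the bodies $\lambda K+t_1,\dots,\lambda K+t_n$ have pairwise disjoint interiors. On the other hand, with $P:=\conv\{t_1,\dots,t_n\}$ one has $\bigcup_{i}(\lambda K+t_i)\subseteq P+\lambda K$, and since every difference $t_i-t_j$ lies in the convex set $2K$ we get $P-P=\conv\{t_i-t_j:i,j\}\subseteq 2K$. Taking volumes and using disjointness of interiors,
\[
  n\,\lambda^{d}\vol{K}=\sum_{i=1}^{n}\vol{\lambda K+t_i}=\vol{\textstyle\bigcup_i(\lambda K+t_i)}\le \vol{P+\lambda K}.
\]

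The heart of the matter is the volume inequality $\vol{P+\lambda K}\le (1+\lambda)^{d}\vol{K}$, valid for every convex body $P$ with $P-P\subseteq 2K$. Here I would exploit that $\lambda K$ is symmetric, so $-(P+\lambda K)=(-P)+\lambda K$ has the same volume as $P+\lambda K$, while their Minkowski sum is $(P+\lambda K)+\bigl((-P)+\lambda K\bigr)=(P-P)+2\lambda K\subseteq 2K+2\lambda K=2(1+\lambda)K$. Applying the Brunn--Minkowski inequality,
\[
  2\,\vol{P+\lambda K}^{1/d}=\vol{P+\lambda K}^{1/d}+\vol{(-P)+\lambda K}^{1/d}\le \vol{2(1+\lambda)K}^{1/d}=2(1+\lambda)\vol{K}^{1/d},
\]
whence $\vol{P+\lambda K}\le(1+\lambda)^{d}\vol{K}$. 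Combined with the previous display this gives $n\lambda^{d}\le(1+\lambda)^{d}$, i.e.\ $n\le\bigl(\tfrac{1+\lambda}{\lambda}\bigr)^{d}=\bigl(1+\tfrac{2}{1+\mu}\bigr)^{d}$.

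I expect the only genuinely non‑routine point to be this volume bound $\vol{P+\lambda K}\le(1+\lambda)^{d}\vol{K}$: once one thinks of passing to the reflected copy and invoking Brunn--Minkowski it is a two-line estimate, but the decisive observation is that the union of the small copies must be enclosed in $P+\lambda K$ rather than in a single translate of $(1+\lambda)K$ — the latter would require $\{t_i\}$ to fit in a translate of $K$, which is false for general $K$ already on the plane (e.g.\ a Euclidean equilateral triangle of side slightly above $\sqrt3$). Translating the hypotheses into the two-sided bound on $\norm{t_i-t_j}_K$ and recording the disjointness of the copies $\lambda K+t_i$ are routine; note also that $\mu=0$ recovers the non-sharp form of the $3^d$ bound for translates.
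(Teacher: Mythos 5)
Your proposal is correct and follows essentially the same route as the paper: translate the two hypotheses into the two-sided bound $1+\mu\le\norm{t_i-t_j}_K\le 2$ on the centres and then run a packing/volume argument with the disjoint shrunken copies inside the convex hull (the paper does this via its Lemma~\ref{lem:terfogatlemma} after rescaling the distances to lie in $[1,\tfrac{2}{1+\mu}]$). The only difference is that where the paper cites the isodiametric inequality for Minkowski spaces as a black box, you reprove exactly that volume bound inline by reflecting $P+\lambda K$ and applying Brunn--Minkowski, which makes the argument self-contained but is not a genuinely different method.
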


\begin{prop}\label{thm:exponential}
Let $M_\mu (K)$ denote the maximum number of translates of a $d$-dimensional, $o$-symmetric convex body $K$ in pairwise intersecting $\mu$-Minkowski arrangement. For $\mu<\sqrt{2}-1$, there exists a lower bound $M_\mu (K)\geq e^{cd}$ for some universal constant $c$.
\end{prop}

%This generalization provides a connection between the original Minkowski arrangement
%and the Bezdek--Pach Conjecture, where the homothets of a convex body are in pairwise touching position.

In 1962, Danzer and Gr\"unbaum \cite{DG62} proved that the maximum cardinality 
of a family 
of pairwise touching translates of a convex body $K$ in $\Red$ is $2^d$, which 
bound is attained if and only if $K$ is an affine image of a cube. 
Petty \cite{P71} showed that every convex body in the plane (or in 3-space) 
has three (four) pairwise touching translates.
As an extension of this problem, Bezdek and Pach \cite{BC88} conjectured in 
1988 that the maximum number of pairwise touching positive homothetic copies of 
a convex body in $\Re^d$ is $2^d$. They showed that any such family of 
homothetic copies has at most $3^d$ elements, and if $K$ is a $d$-dimensional 
Euclidean ball, then the maximum is equal to $d+2$. Naszódi
\cite{N06} improved the first estimate by proving the upper 
bound $2^{d+1}$. In \cite{LN09}, L\'angi and Naszódi proved 
(using a result \cite{BB03} of Bezdek and Brass about one-sided Hadwiger 
numbers) the upper bound $3\cdot2^{d-1}$ in the case when $K$ is centrally 
symmetric.

In Section \ref{sec:BezdekPach}, we show that the conjecture holds on the plane, moreover, 
every planar convex body has four pairwise touching homothets.
\begin{thm}\label{thm:BPplane}
For any convex body $K$ in $\Re^2$, the maximum number of pairwise touching 
positive homothetic copies of $K$ is four.
\end{thm}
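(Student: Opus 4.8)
The plan is to prove the two inequalities separately: every planar convex body has four pairwise touching positive homothets, and it has no five. The lower bound is the easier half, and the upper bound is where essentially all the work lies.

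\emph{Lower bound.} I would start from the classical fact (Petty \cite{P71}, or equivalently the existence of an equilateral triangle in the norm whose unit ball is the difference body $DK=K+(-K)$) that $K$ has three pairwise touching translates, and upgrade this to three pairwise touching \emph{homothets} $C_1,C_2,C_3$ arranged so as to enclose a bounded curvilinear triangular region $R$. I would then show that $R$ contains a positive homothet of $K$ touching all three of $C_1,C_2,C_3$ — taking the largest such homothet produces the fourth body. Since both "being a positive homothet" and "touching" are affine notions, one may normalize $K$ (say so that its difference body is a fixed square or affine-regular hexagon) to make the bookkeeping routine, and polygons or other degenerate bodies, where the enclosed region can collapse, are handled by a limiting argument, taking care that a limit of touching configurations remains touching after a harmless rescaling.

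\emph{Upper bound.} Suppose toward a contradiction that $C_1,\dots,C_5$ are pairwise touching positive homothets of $K$, with $C_i=\lambda_iK+t_i$. I would first pass to a \emph{contracted} configuration in the spirit of Bezdek--Pach \cite{BC88} and Naszódi \cite{N06}: translating the bodies one at a time as far as possible while preserving every existing contact (and rescaling) one may assume the configuration is rigid, so that for each pair $\{i,j\}$ there is a line $\ell_{ij}$ separating the interiors, with a common point $q_{ij}\in C_i\cap C_j\cap\ell_{ij}$; write $v_{ij}=-v_{ji}$ for the unit normal of $\ell_{ij}$ pointing from $C_j$ into $C_i$, so $-v_{ij}$ is an outer normal of $C_i$ and $v_{ij}$ an outer normal of $C_j$ at $q_{ij}$. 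The core is a planar argument: around a fixed body $C_i$ the contact points $q_{ij}$ ($j\neq i$) occur in a cyclic order on $\partial C_i$, and the demand that the four exterior bodies $C_j$ — themselves homothets of $K$ — pairwise touch forces lower bounds on the angular gaps between consecutive $q_{ij}$; summing these around $\partial C_i$ and combining with the reciprocal inequalities obtained by running the same argument around each $C_j$ shows that five bodies cannot coexist. Where it helps, I would also feed in the machinery of Section~\ref{sec:muMinkowski}: after grouping $C_1,\dots,C_5$ into size classes and enlarging each class to a common scale, the enlarged bodies of a class form a pairwise intersecting Minkowski arrangement of some order $\mu$ of translates of a common body, so Theorem~\ref{thm:muMink} caps each class and leaves only a short list of size profiles to rule out directly.

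The main obstacle is precisely this upper-bound step carried out for a \emph{general} planar convex body. When $K$ is neither smooth nor strictly convex, the contact points, outer normals and "angles" are all set-valued, so the angular count must be rephrased via normal cones and support functions and pushed through a case analysis (contacts at a vertex versus along an edge, nested versus side-by-side contacts, and so on). The tempting shortcut of approximating $K$ by smooth strictly convex bodies is delicate, since an exact tangency need not survive the approximation; the argument has to be made intrinsic to $K$ rather than obtained as a limit.
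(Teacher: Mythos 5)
Your lower bound is essentially the paper's argument: produce three pairwise touching homothets enclosing a bounded region $R$, then take the largest homothet of $K$ inside $R$ and show it must touch all three (the paper does this by observing that if it touched only two, the outer normals $u_1,u_2$ of separating lines would have to satisfy $o\in\conv(u_1,u_2)$, i.e.\ $u_1=-u_2$, forcing the two touched bodies to be strictly separated). The paper avoids your limiting argument for degenerate cases by a direct dichotomy: either the fourth touching translate can be chosen through the single contact point $x_2$ of $K_1$ and $K_2$ (done immediately), or the three bodies have no common point and $R$ automatically has non-empty interior. That is cleaner than a limit, since "touching" is not obviously preserved when a configuration degenerates.

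The upper bound is where your proposal has a genuine gap. The central step --- that the pairwise touching of the bodies $C_j$ surrounding a fixed $C_i$ ``forces lower bounds on the angular gaps between consecutive $q_{ij}$'' which, summed around all five bodies, yield a contradiction --- is never formulated as a precise inequality, let alone proved. For a general planar convex body there is no canonical notion of angle at a contact point (normal cones can be degenerate or full edges), and you yourself identify this as the main obstacle; as written, the plan is a research programme, not a proof. The appeal to Theorem~\ref{thm:muMink} also does not close the gap: that theorem bounds arrangements of \emph{translates} of a common body, and for five homothets of genuinely different sizes the ``size classes'' can all be singletons, so the theorem rules out nothing. The paper's route is entirely different and much more elementary: (i) if some point lies in four of the homothets, rescaling from that point gives four touching \emph{translates}, so $K$ is a parallelogram by Danzer--Gr\"unbaum and no fifth body fits; (ii) if some point lies in exactly three, Proposition~\ref{nincs_haromszoros} (a cone argument at the triple point) shows a fourth body must also contain it, reducing to case (i); (iii) if no point lies in three of the bodies, one places a vertex in the interior of each body and draws the touching contacts as arcs to obtain a planar embedding of $K_n$, whence $n\le 4$ since $K_5$ is not planar. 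No metric or angular information about $K$ is needed at all in step (iii); the convexity is used only in steps (i) and (ii). To repair your proposal you would either have to supply and prove the angular inequality for arbitrary (non-smooth, non-strictly-convex) bodies, or replace that step with a topological argument of the paper's kind.
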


The generalized notion of Minkowski arrangement provides a connection between the original problem of pairwise intersecting
Minkowski arrangements and the Bezdek--Pach Conjecture \cite{BC88}. In both problems we consider
pairwise intersecting Minkowski arrangements of order $\mu$, in the first case $\mu=0$, while in the latter case $\mu=1$.

For two points $a,b$ in $\Re^d$, we denote the closed and the open line 
segment connecting them by $[a,b]$ and $(a,b)$, respectively. We use the standard 
notations $\conv$, $\bd$ and $\intof$ to denote the convex hull, the boundary and the interior of a set 
in $\Re^d$, respectively.

In Section \ref{sec:Minkowski}, we prove Theorems \ref{thm:Minkowski3d}, \ref{thm:muMink}, and Propositions \ref{thm:lowerbound} and \ref{thm:exponential}. Sections \ref{sec:BezdekPach} and \ref{sec:lowerbound} together give the proof of Theorem \ref{thm:BPplane}. Finally, in Section \ref{sec:topologicalnote}, we verify Proposition \ref{prop:elhelyezkedesTop}, a topological note that yields to an alternative version of the proof of Theorem \ref{thm:BPplane}.

$\mathbf{Acknowledgement:}$ This work was made under the supervision of Márton Naszódi, who called my attention to this topic. I would like to express my gratitude for his aid, the corrections and all the useful discussions. I thank Géza Tóth the idea of the much shorter proof of Proposition \ref{prop:k5sikbarajzolhato}. This research was supported by the ÚNKP-17-3 New National Excellence Program of the Ministry of Human Capacities.

\section{Bounds on pairwise intersecting Minkowski arrangements}\label{sec:Minkowski}

%\begin{defn}
%Homothets of a centrally symmetric convex body are said to be in \emph{pairwise intersecting Minkowski arrangement} if they are pairwise intersecting but none of them contains the center of any other in its interior.
%\end{defn}

It is natural to conjecture that in $\mathbb{R}^d$, a pairwise intersecting Minkowski arrangement consisting of homothets of a centrally symmetric convex body contains at most $3^d$ elements. Here we prove this upper bound -- and a generalization -- for the case when all the homothets in the arrangement are translates of the given body.

\subsection{Proof of Theorem \ref{thm:Minkowski3d} and \ref{thm:muMink}}
$\ $

First, we verify Theorem \ref{thm:muMink}, then Theorem \ref{thm:Minkowski3d} will follow as a corollary.

Any $o$-symmetric convex body $K$ can be considered as the unit ball of a normed space $(\mathbb{R}^d,\norm{. }_K)$, where for any $x\in \mathbb{R}^d$, $\norm{x}_K= \inf \{\lambda\in \mathbb{R^+} | x\in \lambda K\}$.
It is easy to see that having a pairwise intersecting Minkowski arrangement of order $\mu$ is equivalent to the following two conditions on the distances between centers: none of them can be farther than 2, nor closer than $1+\mu$ to any other. After applying a homothety, this is equivalent to the problem when the distances are between 1 and $\frac{2}{1+\mu}$.

\begin{lem}\label{lem:terfogatlemma}
Consider a centrally symmetric convex body $K$ in $\mathbb{R}^d$ and $v_1,v_2,...,v_n \in \mathbb{R}^d$, so that $1 \leq \norm{v_i-v_j}_K \leq \lambda $ for any $i\neq j$. Then $n\leq (\lambda+1)^d$.
\end{lem}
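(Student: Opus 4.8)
The plan is a volume--packing argument. Since any centrally symmetric convex body is the unit ball of a norm, I would work with $\norm{\cdot}_K$ directly and first record the standard observation that $\norm{v_i-v_j}_K\ge 1$ for $i\ne j$ is equivalent to saying that the homothets $v_i+\tfrac12 K$ have pairwise disjoint interiors (because $\tfrac12 K+\tfrac12(-K)=K$, so two of these homothets overlap in their interiors exactly when $v_i-v_j\in\intof K$). Thus the $n$ bodies $v_i+\tfrac12 K$, each of volume $2^{-d}\vol{K}$, form a packing, and all of them lie inside the single convex body $T:=\conv\{v_1,\dots,v_n\}+\tfrac12 K$. Comparing volumes gives $n\cdot 2^{-d}\vol{K}\le \vol{T}$, so everything reduces to the estimate $\vol{T}\le\bigl(\tfrac{\lambda+1}{2}\bigr)^d\vol{K}$.

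For that estimate I would \emph{not} bound $T$ by a circumscribed homothet of $K$ (that only gives $T\subseteq v_1+(\lambda+\tfrac12)K$, losing a factor $2^d$), but instead control $\vol{T}$ through the difference body of $T$ together with the Brunn--Minkowski inequality. Writing $C=\conv\{v_i\}$, the bound $\norm{v_i-v_j}_K\le\lambda$ gives $v_i-v_j\in\lambda K$ for all $i,j$, hence $C-C\subseteq\lambda K$ (every element of $C-C$ is a convex combination of the vectors $v_i-v_j$). Using $-K=K$,
\[
T-T=(C-C)+\tfrac12 K+\tfrac12(-K)=(C-C)+K\subseteq(\lambda+1)K .
\]
Brunn--Minkowski applied to $T$ then yields $\vol{T}\le 2^{-d}\vol{T-T}\le 2^{-d}(\lambda+1)^d\vol{K}=\bigl(\tfrac{\lambda+1}{2}\bigr)^d\vol{K}$, and combined with the packing inequality this gives $n\le(\lambda+1)^d$.

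The only genuinely non-mechanical step is the decision to estimate $\vol{T}$ via $T-T$ and Brunn--Minkowski: the hypothesis $\norm{v_i-v_j}_K\le\lambda$ says not merely that $\{v_i\}$ has bounded diameter (which would control a circumscribed copy of $\lambda K$ and cost that factor $2^d$) but that it is ``centrally balanced'', in the sense that its difference body is small, and it is exactly this that Brunn--Minkowski converts into a sharp volume bound. Everything else --- the overlap criterion for the homothets, the identity $T-T=(C-C)+K$, and the two volume comparisons --- is routine.
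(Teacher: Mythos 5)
Your proof is correct and takes essentially the same route as the paper: the same packing of the bodies $v_i+\tfrac12 K$ inside $\conv\{v_1,\dots,v_n\}+\tfrac12 K$ and the same volume comparison. The only difference is that where the paper invokes the isodiametric inequality for Minkowski spaces as a black box to bound $\vol{Q}$, you prove exactly that inequality inline via the difference body and Brunn--Minkowski, which makes your version self-contained but not a different argument.
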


\begin{proof}
By the assumption, for different indices the bodies $v_i+\frac{1}{2}K$ are pairwise non-overlapping.
Let $Q=\conv \left[  \bigcup\limits_{i=1}^n \left( v_i+\frac{1}{2}K\right)\right]$. Since $\mathrm{diam}_K (Q) \leq \lambda+1$, using the isodiametric inequality for Minkowski spaces \cite{FLM90} we get that

\begin{equation}\label{terfogatlemma}
\frac{n}{2^d}\vol{K} \leq \vol{Q} \leq \vol{\frac{\lambda+1}{2}K}.
\end{equation}
From this, $n \leq (\lambda+1)^d$ follows.
\end{proof}

Applying this lemma for $\lambda=\frac{2}{1+\mu}$, we get that the number of points with pairwise distances between $1$ and $\frac{2}{1+\mu}$ is at most $ \left( 1 + \frac{2}{1+\mu}  \right)^d $, which is equivalent to the statement of Theorem \ref{thm:muMink}.
\qed

%\subsection{Proof of Theorem \ref{thm:muMink}}

Theorem \ref{thm:Minkowski3d} is the special case of Theorem \ref{thm:muMink} with $\mu=0$, so the upper bound $3^d$ follows easily.

To reach this, \eqref{terfogatlemma} has to hold with two equalities. From the following lemma of Groemer \cite{G61} we can see that this happens if and only if $K$ is a $d$-dimensional parallelotope.

\begin{lem}\label{Groemer}
Suppose that $K$ is a convex body in $\mathbb{R}^d$ such that for some $1<t\in \mathbb{R}$ the body $tK$ can be decomposed into translates of $K$. Then $K$ is a $d$-dimensional parallelotope and $t$ is an integer. The partition is unique.
\end{lem}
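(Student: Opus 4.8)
The plan is to prove all three assertions---that $K$ is a $d$-dimensional parallelotope, that $t$ is an integer, and that the partition is unique---simultaneously, by induction on the dimension $d$. Throughout, ``$tK$ can be decomposed into translates of $K$'' means that $tK=\bigcup_{i=1}^{n}(K+v_i)$ for some $v_1,\dots,v_n$ with the translates $K+v_i$ having pairwise disjoint interiors; comparing Lebesgue measures immediately gives $n=t^{d}$, so at the outset we know only that $t^{d}\in\mathbb{Z}_{>0}$. For $d=1$ a convex body is a closed segment, and a longer segment can be exactly covered by non-overlapping translates of it only by laying these end to end at consecutive positions; hence $t\in\mathbb{Z}_{\ge 2}$, the decomposition is the obvious one, and a segment is a $1$-dimensional parallelotope. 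This settles the base case.

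The first step of the inductive argument is to show that $K$ must have a facet (a $(d-1)$-dimensional face). For $i\ne j$ the disjoint-interior bodies $K+v_i$ and $K+v_j$ can be separated by a hyperplane $H_{ij}$, so $(K+v_i)\cap(K+v_j)$ lies in the face $(K+v_i)\cap H_{ij}$ of $K+v_i$, which is proper because $K+v_i$ is full-dimensional. If $K$ had no facet, every such intersection---and hence the whole ``wall set'' $W=\bigcup_{i<j}(K+v_i)\cap(K+v_j)$---would have dimension at most $d-2$; but then, for $d\ge 2$, the set $\intof(tK)\setminus W$ is connected (removing a closed set of dimension $\le d-2$ does not disconnect a connected open subset of $\mathbb{R}^d$), and every one of its points lies in the interior of exactly one tile (a point on the boundary of its unique tile would have a neighbourhood covered by no tile, impossible). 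Hence this connected set lies in a single $\intof(K+v_{i_0})$, so $tK=\overline{\intof(tK)\setminus W}\subseteq K+v_{i_0}$, contradicting $\mathrm{Vol}(tK)=t^{d}\,\mathrm{Vol}(K)>\mathrm{Vol}(K)$. (The same argument shows $K$ cannot be strictly convex.) So $K$ has a facet $F_0$. Next comes a hereditary property: if $H$ is the supporting hyperplane of $tK$ whose contact face is the facet $tF_0$, then a tile $K+v_i$ meeting $H$ must have $H$ supporting it, so $(K+v_i)\cap H$ is a translate of $F_0$; these translates have pairwise disjoint relative interiors in $H$ and cover $tF_0$. Thus $tF_0$ is decomposed into translates of $F_0$ inside the $(d-1)$-dimensional hyperplane $H$, so the inductive hypothesis gives that $t\in\mathbb{Z}$, that $F_0$ is a $(d-1)$-dimensional parallelotope, and that the tiles of $tK$ meeting $H$ are exactly the $t^{d-1}$ ``grid'' translates.

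The remaining step---which I expect to be the main obstacle---is to upgrade ``$K$ has a parallelotope facet'' to ``$K$ is a parallelotope''; this genuinely requires the global decomposition, since, for example, a rhombic dodecahedron has all facets parallelotopes without being one. I would argue as follows. Apply an affine map so that $F_0=[0,1]^{d-1}\times\{0\}$ and $K\subseteq\{x_d\ge 0\}$ (writing $x=(x_1,\dots,x_d)$); by the previous step the ``floor'' tiles---those meeting $\{x_d=0\}$---are the $t^{d-1}$ translates of $K$ by $\{0,\dots,t-1\}^{d-1}\times\{0\}$. Since there are finitely many tiles, there is a smallest height $\delta>0$ at which a non-floor tile begins, so for $0<s<\delta$ the slice $tK\cap\{x_d=s\}$ is partitioned exactly by the corresponding slices of the floor tiles; comparing $(d-1)$-volumes and using that $s\mapsto\mathrm{Vol}_{d-1}(K\cap\{x_d=s\})^{1/(d-1)}$ is concave (Brunn--Minkowski) forces equality in Brunn--Minkowski at all these heights, hence every low slice of $K$ is a translate of $F_0$---that is, near its base $K$ coincides with a (possibly slanted) prism over $F_0$. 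Peeling off this bottom layer and iterating shows $K$ is globally a prism over $F_0$, hence, $F_0$ being a $(d-1)$-parallelotope, $K$ is a $d$-parallelotope. Finally, once $K$ is a parallelotope we may take $K=[0,1]^{d}$, so $tK=[0,t]^{d}$ is tiled by finitely many unit cubes; the cube containing the origin must be $[0,1]^{d}$, and removing it and inducting on the (now integral) value of $t$ shows that the tiling is the standard grid and that it is unique---which closes the induction.
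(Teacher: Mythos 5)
Note first that the paper does not prove this lemma at all: it is quoted from Groemer's 1961 paper \cite{G61} and used as a black box, so there is no in-paper argument to compare yours against. Judged on its own merits, your proposal is a serious and largely sound plan: the base case $d=1$, the argument that $K$ must have a facet $F_0$ (via the wall set $W$ of dimension $\le d-2$ not disconnecting $\intof(tK)$), the hereditary step showing that the bottom facet $tF_0$ of $tK$ is tiled by translates of $F_0$ (which delivers $t\in\mathbb{Z}$ and the parallelotope structure of $F_0$ by induction), and the Brunn--Minkowski equality argument showing that all slices of $K$ at heights $s<\delta$ are translates of $F_0$ are all correct and can be made rigorous as written.

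The genuine gap is exactly where you predicted it: the sentence ``peeling off this bottom layer and iterating shows $K$ is globally a prism over $F_0$'' is not a proof, and the natural reading of it fails. If you peel the slab $\{0\le x_d\le\delta\}$ off every tile and off $tK$, the leftover pieces are no longer translates of a single convex body tiling a homothet of itself (the truncated floor tiles and the untouched second-layer tiles have different shapes), so the inductive setup you have built is destroyed and there is nothing to iterate. What is actually needed is the statement that $\delta$ equals the full height $h$ of $K$, i.e.\ that no new tile can begin strictly below the top of the floor tiles; your argument only controls $K$ up to height $\delta$ and gives no mechanism for pushing past it. The good news is that your own slice-area bookkeeping closes this gap with one more step: writing $A(s)=\vol{K\cap\{x_d=s\}}_{d-1}$ (normalized so $A(0)=1$), the balance equation at height $s\in(\delta,\min(t\delta,h,h_2))$ reads $t^{d-1}A(s/t)=t^{d-1}A(s)+\sum A(s-\delta)$ over the $N_1\ge1$ tiles based at height $\delta$; letting $s\downarrow\delta$ and using continuity of $A$ on $(0,h)$ together with $A(0^{+})=A(0)=1$ gives $t^{d-1}=t^{d-1}+N_1$, a contradiction unless $\delta\ge h$. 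Since $\delta\le h$ (otherwise the slab $h<x_d<\delta$ of $tK$ is uncovered), $\delta=h$, $A\equiv1$ on $[0,h]$, and $K$ is a (slanted) prism over the parallelotope $F_0$, hence a parallelotope. You should also tighten the final uniqueness step: ``removing the corner cube and inducting on $t$'' does not literally work since the complement of a corner cube is not a homothet of $K$; instead induct on the dimension (the cubes meeting a facet of $[0,t]^d$ form a grid layer by the $(d-1)$-dimensional case, and the remaining cubes tile $[0,t]^{d-1}\times[1,t]$, which one handles by induction on the height).
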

\qed

\begin{rem}
The bound in Theorem \ref{thm:muMink} gives the known result $2^d$ for the pairwise touching case, when $\mu=1$.
\end{rem}

%Notice that $M_\mu$ really depends on the value of $\mu$, and not only on whether $\mu=0$ or not. For example, when $\mu=\frac{1}{2}$, the above theorem in the plane gives 5 as an upper bound, but there exists a sufficiently small positive $\mu$ for which and for some $K$ we can find a planar $\mu$-Minkowski arrangement of 6 translates. See Figure \ref{fig:mu_nemcsak0}. (Hibás, elvileg tízszögekre volt példa, de valószínűleg nem jó.)

\subsection{Proof of Proposition \ref{thm:lowerbound}}
$\ $

First, we show a construction of seven bodies in $\mathbb{R}^2$, then the lower bound $2d+3$ for the higher dimensional cases will follow recursively.
In $\mathbb{R}^2$, consider an affine-regular hexagon inscribed in $K$ that is symmetric about the center of $K$ (see for example \cite[Lemma 4.3]{PA91}). There exist seven translates of this hexagon in Minkowski arrangement, shown in Figure \ref{fig:hatszogek}. Translate $K$ in a way that the center points are the same as the centers of the above hexagons. Now a center of any translate is either not contained in another body, or lies on its boundary. Furthermore, these translates share a common point, so they are pairwise intersecting. This means, that the construction gives a Minkowski arrangement.

\begin{figure}[h]
\centering
\includegraphics[width=0.4\textwidth]{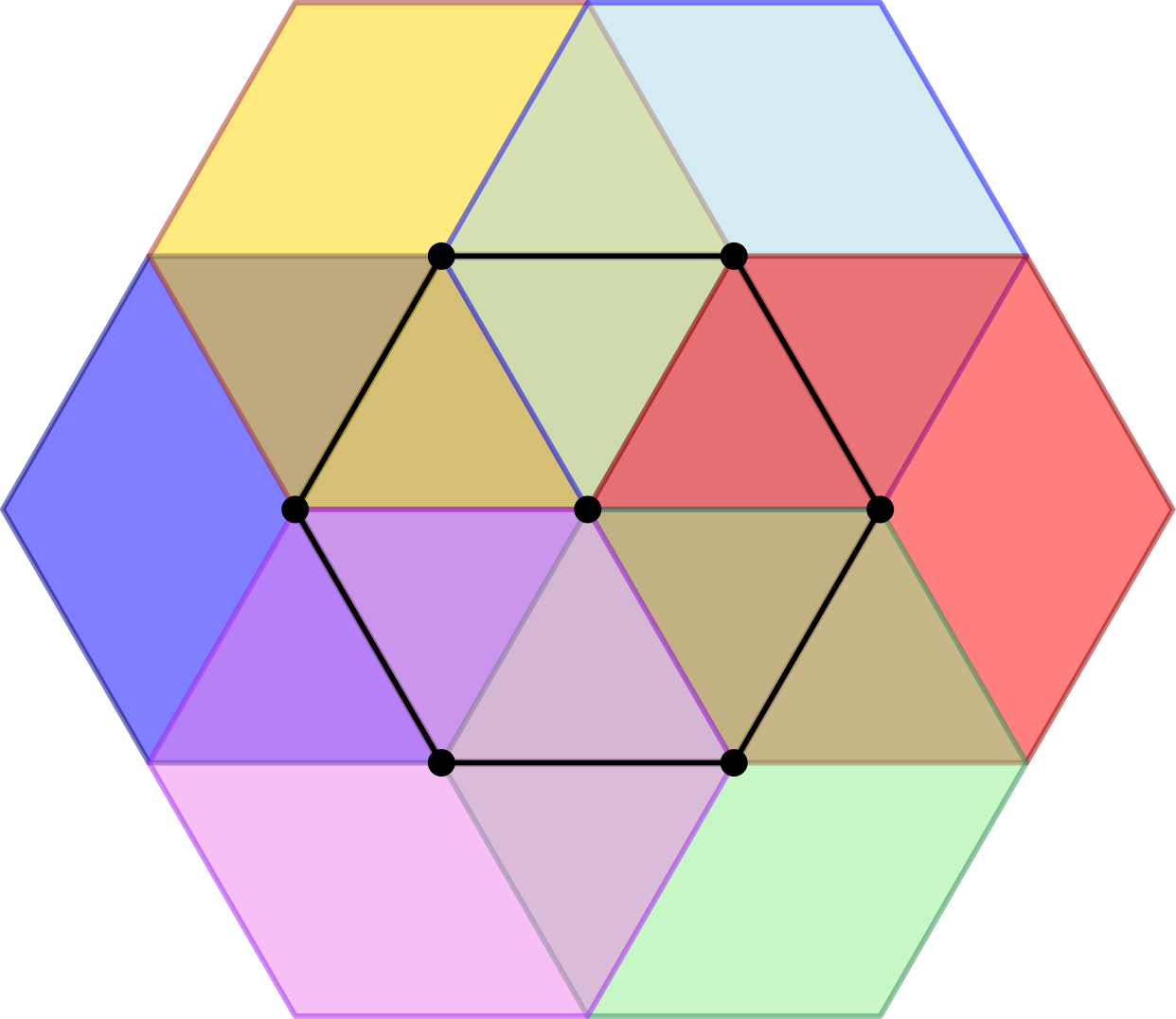}
\caption{Seven translates of an affine-regular hexagon in Minkowski arrangement}
\label{fig:hatszogek}
\end{figure}

For a centrally symmetric convex body $K$ in $\mathbb{R}^d$, denote by $M(K)$ the maximal number of translates in a  pairwise intersecting Minkowski arrangement. It is easy to see, that for any $K$ in $\mathbb{R}^1$, $M(K)=3$, and we showed that for $K$ in $\mathbb{R}^2$, $M(K)\geq 7$.

Let $e_1,...,e_d$ be an Auerbach basis \cite[Chapter 3]{thompson_1996} of the space $(\mathbb{R}^d, \norm{ .}_K)$. In dimension $d\geq 3$, using the above planar construction, we can take 7 translates of $K$ in a Minkowski arrangement such that their centers lie in the plane of the first two basis vectors $e_1$ and $e_2$. Along each direction $e_3,...,e_d$ we can add two further translates of $K$ to the arrangement so that they contain $o$ on their boundary.
%This way, we get that $M_{d+1}(K)\geq M_{d}(K)+2$.\\
\qed

%\section{Minkowski arrangements of order $\mu$}\label{sec:muMinkowski}

Now we verify Proposition \ref{thm:exponential}. Note that Theorem 3. of \cite{NPS17} by Naszódi, Pach and Swanepoel implies the existence of an exponential lower bound for translates in pairwise intersecting Minkowski arrangement. Their idea was based on a result of Arias-de-Reyna, Ball, and Villa \cite{arias_de_reyna}. Here we give a similar argument for $\mu$-Minkowski arrangements.

\begin{proof}[Proof of Proposition \ref{thm:exponential}]
The statement follows from a result of Bourgain \cite{B86}. He showed that on the unit sphere of any normed space, there is an exponentially large number of points so that the distance of any two is more than $\sqrt{2}-\varepsilon$. Consider the $o$-symmetric convex body $K$ as the unit ball of the normed space $(\mathbb{R}^d,\norm{. }_K)$. Choosing $\mu<\sqrt{2}-1$, we get exponentially many points on the sphere so that their pairwise distances are between $1+\mu$ and 2. Considering these points as centers, we verify the statement.
\end{proof}

\section{Proof of the upper bound in Theorem~\ref{thm:BPplane}}\label{sec:BezdekPach}

%In this section, we show that the maximum number of pairwise touching positive homothetic copies of a convex body $K$ in $\Re^2$ is at most four.

Let $\mathcal{K}=\{K_1,K_2,\ldots,K_n\}$ be a family of pairwise touching positive 
homothetic copies of a planar convex body $K$. 

If there is a point that belongs to four of the homothets, then we can 
enlarge (or shrink) each of the four bodies from that point as a center, to 
obtain four 
touching translates of $K$. By the result of Danzer and Gr\"unbaum \cite{DG62},
 this implies that $K$ is a parallelogram. It is easy to see that in this case, the 
family does not have a fifth member. Thus, from this point on, we will assume 
that no point belongs to four of the homothets.

If there is a point that belongs to three of the homothets and $\mathcal{K}$ has at least 
four members, then we will show that this point also belongs to a fourth body.

\begin{prop}\label{nincs_haromszoros}
Let $K_1,\  K_2,\ K_3,\ K_4$ be pairwise touching positive homothets 
of the convex body $K$ in $\mathbb{R}^2$. If $K_1\cap K_2\cap K_3\neq \emptyset$,
 then $K_1\cap K_2\cap K_3\cap K_4\neq \emptyset$.
\end{prop}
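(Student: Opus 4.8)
The plan is to suppose, for contradiction, that $p \in K_1 \cap K_2 \cap K_3$ but $p \notin K_4$. Since $K_4$ is convex and $p \notin K_4$, there is a line $\ell$ through $p$ with $K_4$ lying (weakly) on one side of $\ell$; in fact we may take $\ell$ to be a supporting-type separating line so that $K_4$ lies in one of the two closed half-planes $\ell^+$, and $p \in \ell^- \setminus \ell$ (or $p \in \ell$, which we can also handle). The key geometric input is that $K_4$ must touch each of $K_1, K_2, K_3$: for each $i \in \{1,2,3\}$ there is a point $q_i \in K_i \cap K_4$ with disjoint interiors, so $q_i \in \ell^+$. First I would argue that each $K_i$ therefore "reaches across" $\ell$: it contains the point $p \in \ell^-$ and the point $q_i \in \ell^+$, hence $K_i$ meets $\ell$ in a segment, and contains a point of $\ell^+$.

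**Reducing to a one-dimensional / cyclic-order argument.**

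Next I would look at the line $\ell$ itself and consider, for each $i \in \{1,2,3\}$, the set $I_i = K_i \cap \ell$, which is a compact segment containing $p$. Because the three bodies $K_1, K_2, K_3$ are pairwise touching (interiors disjoint) and all contain $p$, their interiors are pairwise disjoint, yet they share the boundary point $p$; so locally near $p$ they occupy three angular sectors around $p$. The separating line $\ell$ splits the neighborhood of $p$ into two half-planes, and by pigeonhole two of the three bodies, say $K_1$ and $K_2$, have their "local sector at $p$" contained in the closed half-plane $\ell^+$ on the same side as $K_4$ — or more precisely, I want to extract from the pairwise-touching + common-point configuration that some $K_i$ is squeezed so that near $p$ it lies in $\overline{\ell^-}$ while $K_4$ lies in $\overline{\ell^+}$, forcing $K_i$ and $K_4$ to touch only along $\ell$, and then use the homothety relationship (all are positive homothets of the same $K$) to derive a contradiction with $p \notin K_4$. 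The cleanest route is probably: among $K_1, K_2, K_3$ pick the one, say $K_3$, whose interior angle at $p$ is "in the middle" relative to the direction of $K_4$, show its touching point $q_3$ with $K_4$ must be $p$ itself, and conclude $p \in K_4$.

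**The main obstacle.**

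The hard part will be making the "angular sectors at $p$" argument rigorous without assuming smoothness or strict convexity of $K$ — the bodies $K_i$ may share an entire boundary segment through $p$, not just touch transversally, so "the sector of $K_i$ at $p$" needs to be defined via the tangent cone (support cone) $C_i$ of $K_i$ at $p$, and I must handle degenerate cases where some $C_i$ is a half-plane or even all of $\Re^2$ (the latter only if $p \in \intof K_i$, which is excluded since $p$ is in three bodies with pairwise-disjoint interiors — here the no-four-fold-point reduction already in force, plus pairwise disjoint interiors, is what keeps things in general position enough). I expect to need: the three tangent cones $C_1, C_2, C_3$ at $p$ have pairwise disjoint interiors, so they fit into the $2\pi$ of angle around $p$, which constrains each to be "thin"; and $K_4$, touching all three, has a tangent cone or limiting direction at its nearest approach that must be trapped. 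Once the combinatorics of these cones around $p$ is pinned down, the contradiction with $p \notin K_4$ should follow by a short separation/continuity argument, and I would then remark that the case $n \geq 5$ is excluded afterward (in the surrounding text) once this proposition shows a triple point is automatically a quadruple point, contradicting the standing assumption that no point lies in four homothets.
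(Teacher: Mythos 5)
Your setup coincides with the paper's: assume $p\in K_1\cap K_2\cap K_3$ but $p\notin K_4$, separate $K_4$ from $p$ by a line, form the support cones $C_1,C_2,C_3$ of the three bodies at $p$, observe that their interiors are pairwise disjoint, and single out a ``middle'' one. Up to that point you are on the paper's track. The gap is in the endgame. You propose to ``show its touching point $q_3$ with $K_4$ must be $p$ itself'' and assert that ``the contradiction should follow by a short separation/continuity argument,'' with only a parenthetical nod to ``use the homothety relationship.'' That is precisely the step that carries all the weight, and it is not sketched in any checkable form. Note that the statement is \emph{false} for general pairwise touching convex bodies: take three thin triangles with common apex $p$, fanning upward in disjoint angular sectors and all reaching the line $y=1$, and let $K_4$ be a wide rectangle sitting on that line. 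These are pairwise touching, the first three meet at $p$, yet $p\notin K_4$, and the middle triangle touches $K_4$ far from $p$. So no amount of cone combinatorics around $p$ plus ``separation/continuity'' can close the argument; the homothety hypothesis must enter at a specific, identified point, and your outline never says where.

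The paper's missing ingredient is this: let $K_1$ be the body whose cone $C_1$ meets the separating line $L_4$ in the middle. The positive homothety carrying $K_1$ to $K_4$ carries $C_1$ to a \emph{translate} $C_1'$ of $C_1$ (same apex angle, same directions), and since $K_1\subset C_1$, it forces $K_4\subset C_1'$. Because $C_1,C_2,C_3$ have pairwise disjoint interiors and $C_1$ lies between the other two, the displaced thin cone $C_1'$ is disjoint from at least one of $C_2,C_3$, so $K_4$ cannot touch the body contained in that cone --- contradiction. This is also a different conclusion from the one you aim at (the paper never shows the touching point is $p$; it shows $K_4$ fails to touch one of the other bodies). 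Your worries about degenerate tangent cones are legitimate but secondary; without the ``$K_4$ lives in a translate of the middle cone'' step, the proof does not go through.
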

\begin{proof}
Let  $p\in K_1\cap K_2\cap K_3$, and $C_i$ be the smallest angular region with vertex $p$ containing $K_i$.

We show that $\intof C_i \cap \intof C_j=\emptyset$
for any $i\neq j$, $i,j\in\{1,2,3\}$.

Suppose that for a pair $i\neq j$ there exists $c\in(\intof C_i \cap \intof C_j)$. 
Then the line $pc$ intersects the interior of both $K_i$ and $K_j$ because 
$C_i$ and $C_j$ are the smallest angular regions containing $K_i$ and $K_j$ respectively.
 Hence due to the convexity of the bodies, $K_i$ overlaps $K_j$, which is a contradiction.

Suppose that $K_1\cap K_2\cap K_3\cap K_4= \emptyset$. Then $p\notin K_4$,
thus there exists a supporting line $\ell$ of $K_4$ that does not go through $p$
and separates $K_4$ from $p$. $K_4$ touches $K_1$, $K_2$ and $K_3$, hence each of 
these three bodies has a point in both of the closed half-planes bounded by $\ell$.
From this it follows that $\ell$ intersects the angular regions $C_1$, $C_2$ and $C_3$. For every 
$i\in\{1,2,3\}$, $\ell\cap C_i$ is a connected subset of $\ell$, thus there is a middle
one of them. Without loss of generality we can assume that this one is $K_1$. Let $v_1=p-x_1$. The image of $p$ by the homothety that maps
$K_1$ to $K_4$ is the point $x_4+\frac{\lambda_4}{\lambda_1}\cdot v_1 \in \ell$. The same 
homothety maps $C_1$ to the angular region $C_1^{\prime}:=C_1+\left(x_4-x_1+v_1\cdot \left(\frac{\lambda_4}{\lambda_1}-1\right)\right)$. 
(Figure \ref{fig:nincsharomszoros}) As $K_1\subset C_1$ and the bodies are positive
homothets, $K_4\subset C_1^{\prime}$ follows. At least one pair of the bounding
lines of $C_1$ and $C_1^{\prime}$ are different, thus due to the fact that
$\intof C_i \cap \intof C_j=\emptyset$ for any $i\neq j$, $i,j\in\{1,2,3\}$
 $C_1^{\prime}$ is disjoint to at least one of the angular regions $C_2$ and $C_3$. But in this case
$K_4$ cannot touch the body lying in this angular region, which is a contradiction.

\end{proof}

Thus, it is enough to consider the case when no point belongs to three of the homothets.

\begin{prop}\label{prop:k5sikbarajzolhato}
Let $K_1$, $K_2$,...,$K_n$ be pairwise touching convex bodies in $\Re^2$, 
such that no three share a common point. Then $n\leq 4$.
\end{prop}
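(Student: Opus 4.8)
The plan is to argue by contradiction: suppose $n\geq 5$ pairwise touching planar convex bodies exist with no three sharing a common point. For each pair $i\neq j$ pick a touching point $p_{ij}\in K_i\cap K_j$; since no three bodies meet, these $\binom n2$ points are distinct and $p_{ij}$ lies on a common supporting line $\ell_{ij}$ of $K_i$ and $K_j$, with $K_i$ and $K_j$ on opposite closed sides of $\ell_{ij}$. The natural object to study is the abstract graph (in fact hypergraph of touching points) drawn in the plane: I would draw, for each body $K_i$, a point $q_i\in\intof K_i$, and for each pair connect $q_i$ to $q_j$ by a Jordan arc passing through $p_{ij}$ that stays inside $K_i\cup K_j$ except at the single crossing point $p_{ij}$. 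The first key step is to show this yields a planar drawing of $K_5$: two arcs $q_iq_j$ and $q_kq_l$ with $\{i,j\}\cap\{k,l\}=\emptyset$ cannot cross, because such a crossing point would lie in $(K_i\cup K_j)\cap(K_k\cup K_l)$, forcing two of the four bodies to overlap in their interiors (using convexity and the separating lines as in the previous proposition), contradicting that touching bodies have disjoint interiors; and two arcs sharing an endpoint $q_i$ can be taken to leave $q_i$ in distinct directions inside $\intof K_i$, so they only meet at $q_i$. Since $K_5$ is non-planar, already $n=5$ is impossible, and hence $n\leq 4$.

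The step that needs the most care is ruling out crossings between \emph{independent} arcs $q_iq_j$ and $q_kq_l$. The arc $q_iq_j$ lies in $K_i\cup K_j$ and the only point where it switches from $K_i$ to $K_j$ is $p_{ij}$ on the separating line $\ell_{ij}$; similarly for $q_kq_l$. A crossing point $x$ therefore lies in one of $K_i\cap K_k$, $K_i\cap K_l$, $K_j\cap K_k$, $K_j\cap K_l$ — say $x\in K_i\cap K_k$. That alone is not a contradiction (touching bodies do share a point), so I must push further: I would argue that if the two arcs genuinely cross transversally at $x$, then locally the arc $q_iq_j$ separates a neighborhood of $x$ and the arc $q_kq_l$ passes from the $K_i$-side to the $K_k$-side, which combined with convexity of $K_i$ and $K_k$ and the fact that $p_{ik}$ is their unique common supporting contact forces $\intof K_i\cap\intof K_k\neq\emptyset$. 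Making the local-to-global jump rigorous is the technical heart; one clean way is to choose the arcs as polygonal paths built from the segments $[q_i,p_{ij}]$ and $[p_{ij},q_j]$, so that crossings are between line segments and one can directly invoke convexity. I would also handle, as a degenerate subcase, arcs that overlap along a sub-arc rather than crossing transversally, by a small perturbation of the interior points $q_i$.

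Finally I would assemble the pieces: the drawing realizes the complete graph $K_n$ with no two independent edges crossing and adjacent edges meeting only at the shared vertex, i.e. a planar embedding of $K_n$; by Kuratowski/Euler's formula $K_n$ is planar only for $n\leq 4$, so $n\leq 4$, completing the proof. The main obstacle, to reiterate, is the no-crossing lemma for independent edges; everything else — distinctness of touching points, existence of separating lines, choosing interior basepoints and arc directions — is routine and follows the template already set in the proof of Proposition \ref{nincs_haromszoros}.
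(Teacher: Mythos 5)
Your plan is essentially the paper's own proof: place a vertex $q_i$ in the interior of each body, route the edge $q_iq_j$ through the touching point inside $K_i\cup K_j$, verify that this is a planar drawing of the complete graph, and conclude $n\le 4$ from the non-planarity of $K_5$. The no-crossing step you flag as the technical heart is in fact immediate once you adopt your own suggested polygonal arcs $[q_i,p_{ij}]\cup[p_{ij},q_j]$: each edge then lies in $\intof K_i\cup\{p_{ij}\}\cup\intof K_j$, so a crossing of independent edges would yield either a point common to two disjoint interiors or a touching point belonging to three of the bodies, both impossible --- no transversality or local-to-global argument is needed.
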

\begin{proof}
For each $i\in \{1,...,n\}$, choose an interior point $p_i\in K_i$. The bodies are pairwise touching, so we can draw a curve between any two of the chosen points $p_i$, $p_j$ so that it lies in $K_i \cup K_j$. Since no three of the bodies share a common point, these curves intersect only in the interior of the bodies. It is easy to see that we can eliminate these intersections with a perturbation. This way we draw the complete graph of $n$ vertices on the plane, from which $n \leq 4$ follows immediately. 
\end{proof}

\begin{figure}[h]
\centering
\includegraphics[width=0.55\textwidth]{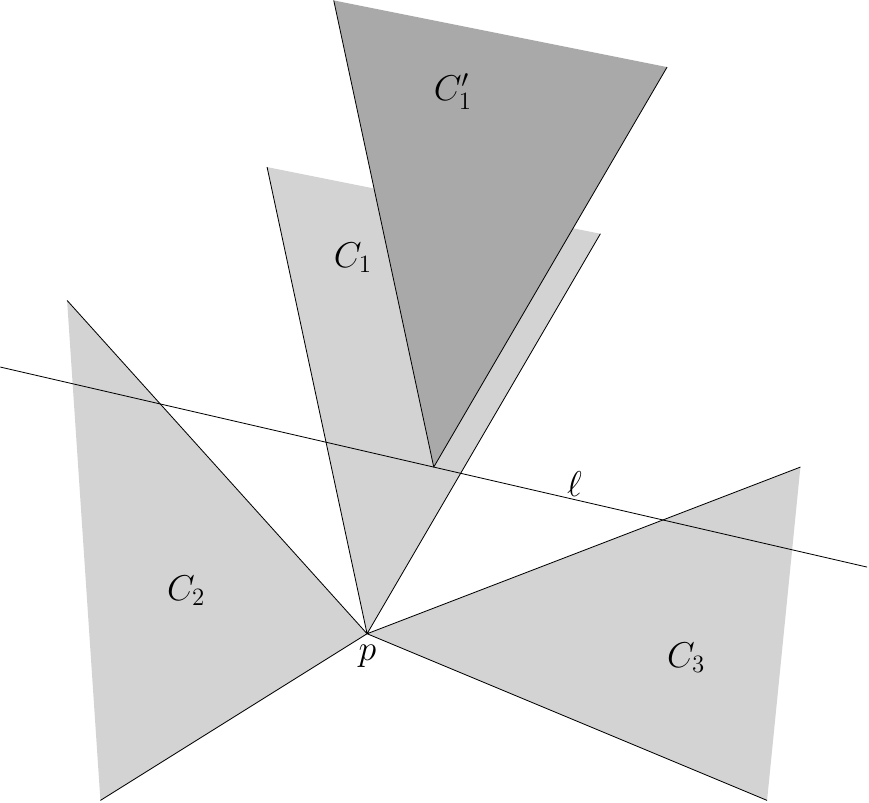}
\caption{The smallest angular regions containing the bodies}
\label{fig:nincsharomszoros}
\end{figure}
%\begin{figure}[h]
%\centering
%\includegraphics[width=0.65\textwidth]{}
%\caption{}
%\label{fig:korulzar}
%\end{figure}
%\begin{figure}[h]
%\centering
%\includegraphics[width=0.5\textwidth]{}
%\caption{}
%\label{fig:hatszog}
%\end{figure}
%\begin{figure}[h]
%\centering
%\includegraphics[width=0.5\textwidth]{}
%\caption{}
%\label{fig:otszog}
%\end{figure}
%\begin{figure}[h]
%\centering
%\includegraphics[width=\textwidth]{}
%\caption{}
%\label{fig:negyszog}
%\end{figure}
%\begin{figure}[h]
%\centering
%\includegraphics[width=\textwidth]{}
%\caption{}
%\label{fig:haromszog}
%\end{figure}
%
%\end{proof}
%
%
%\begin{rem}
%The following proposition is also true and can be proved the same way considering further cases:
%
%If $K_1,\ K_2,\ K_3,\ K_4$ in $\mathbb{R}^2$ are pairwise touching convex bodies and $K_1\cap K_2\cap K_3\cap K_4 = \emptyset$, then one of them lies in %the internal region surrounded by the other three.
%\end{rem}

\section{Proof of the lower bound in Theorem~\ref{thm:BPplane}}\label{sec:lowerbound}

In this section, we show that for any planar convex body $K$, there are four pairwise touching 
homothets of $K$.

Consider two distinct parallel support lines of $K$ that each touch $K$ at one 
point: $x_1$ and $x_2$. The existence of such pair of lines follows from Theorem 2.2.9. of \cite{Sch14}, but may also 
be proved as an exercise.

Let $K_1=K$ and $K_2=K+x_2-x_1$. Let $f$ be the line through the single point 
of contact, $x_2$, of $K_1$ and $K_2$ parallel to $x_2-x_1$. On both 
sides of $f$, there is a translate of $K$ that touches both $K_1$ and $K_2$. 
Indeed, if we push $K$ around $K_1$ so that it always touches $K_1$ then, by 
continuity, such two positions will be found.

If on both sides we can find such translates of $K$ that also contains $x_2$ 
then $x_2$ is a common point of four translates of $K$ and we are done. Thus we 
assume that at least one of these translates does not contain $x_2$. We call 
this translate $K_3$. 

Now, $K_1,K_2,K_3$ are pairwise touching translates of $K$ that do not share a 
common point. It follows that they surround a bounded region $R$ with non-empty 
interior. Consider the largest homothet $K_4$ of $K$ contained in $R$. To 
finish the proof, we claim that $K_4$ touches $K_1,K_2$ and $K_3$. Indeed, 
assume $K_4$ touches only two of them, say $K_1$ and $K_2$. Consider a line 
that 
separates $K_4$ and $K_1$, and another line that separates $K_4$ and $K_2$. Let 
$u_1$ and $u_2$ be the unit normal vectors of these two lines respectively, 
pointing away from $K_4$. Clearly, if the origin is not in $\conv(u_1,u_2)$ 
then 
$K_4$ can be moved a little inside $R$ so that it does not touch either $K_1, 
K_2$ or $K_3$. Then, we may enlarge $K_4$ slightly within $R$ contradicting the 
maximality of $K_4$. Thus $o\in\conv(u_1,u_2)$, that is $u_1=-u_2$. However, in 
this case, $K_1$ and $K_2$ are strictly separated, which is a contradiction, finishing the proof of the lower bound in Theorem \ref{thm:BPplane}.

\section{A topological note}\label{sec:topologicalnote}

In this section we present Proposition \ref{prop:elhelyezkedesTop}, a topological observation, which may be used in place of Proposition \ref{prop:k5sikbarajzolhato} to prove the upper bound in Theorem \ref{thm:BPplane}.

%In this section we formulate a more general version of 
%Proposition~\ref{prop:elhelyezkedes}. This generalization is not needed for the 
%proof of Theorem~\ref{thm:BPplane}, but may be of interest.

An \emph{arc} in the plane is the image of an injective continuous map 
of the $[0,1]$ interval into the plane.
A \emph{Jordan curve} in the plane is the image of an injective continuous map 
of the circle into the plane.
We will call the closed bounded region bounded by a Jordan curve a \emph{Jordan 
region}.

Let $K_1,K_2,K_3$ be three pairwise touching Jordan regions whose pairwise 
intersections are non-empty arcs (which may be degenerate, that is a single 
point). Using the Jordan curve theorem, it is easy to show that the complement 
of 
$K_1\cup K_2\cup K_3$ in the plane has two connected components, one 
bounded and one unbounded. We call the closure of the bounded component the 
\emph{internal region} surrounded by $K_1,K_2, K_3$, and the closure of the 
unbounded component the \emph{external region}.

\begin{prop}\label{prop:elhelyezkedesTop}
Let $K_1,K_2,K_3,K_4$ be four pairwise touching Jordan regions whose pairwise 
intersections are non-empty arcs (which may be degenerate, that is a single 
point). Suppose that $K_1\cap K_2 \cap K_3 \cap K_4=\emptyset$. Then one of them lies in the internal region surrounded by the other 
three.
\end{prop}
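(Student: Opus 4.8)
The plan is to mimic the proof of Proposition~\ref{prop:elhelyezkedes} as closely as possible, replacing every use of convexity by a purely topological substitute. The combinatorial skeleton of that argument only used two facts about the four convex bodies: first, that we can choose pairwise distinct touching points $p_{ij}\in K_i\cap K_j$; second, the non-crossing property \eqref{eq:nocrossing}, namely that a segment joining two touching points of $K_i$ is disjoint from a segment joining two touching points of $K_j$ whenever $i\neq j$. In the topological setting the segment $[p_{ik},p_{il}]$ has to be replaced by an arc $\gamma^i_{k,l}$ running inside $K_i$ from $p_{ik}$ to $p_{il}$; such an arc exists because $K_i$ is a Jordan region, hence arcwise connected (in fact, after a homeomorphism of the closed disc we may take these three arcs inside a single $K_i$ to be pairwise disjoint except at their shared endpoint, or at worst meeting in a tree). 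The non-crossing property $\gamma^i_{k,l}\cap\gamma^j_{m,n}=\emptyset$ for $i\neq j$ then follows from the hypothesis that $K_i\cap K_j$ is a single arc together with the fact that $\gamma^i$ lies in $K_i$ and $\gamma^j$ in $K_j$: their intersection is contained in $K_i\cap K_j$, and one arranges the endpoints (the $p_{ij}$) to be chosen so that the connecting arcs avoid this common arc except where forced.

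First I would set up the six points $p_{ij}$ and the twelve connecting arcs, and prove the topological non-crossing lemma carefully — this is the one genuinely new ingredient and the main obstacle. The subtlety is that in the convex case a segment between two points of a convex set stays in the set automatically and two such segments in different bodies can only meet inside $K_i\cap K_j$, which is itself convex hence a single ``blob''; topologically I must instead \emph{design} the arcs. A clean way: fix $i$ and a homeomorphism $h_i\colon \overline{\mathbb D}\to K_i$; the three points of $K_i$ among the $p_{ij}$ pull back to three boundary or interior points of the disc, and I join them by a ``tripod'' (three arcs from a common centre), then push forward. For the cross-body disjointness, note $\gamma^i_{k,l}\cap\gamma^j_{m,n}\subset K_i\cap K_j$, which is a single arc $\alpha_{ij}$; since $\gamma^i$ meets $\partial K_i$ only at its endpoints (by the tripod construction, pushing the interior of each arc into $\intof K_i$) and $\alpha_{ij}\subset\partial K_i$ unless $K_j\subset K_i$, the intersection reduces to at most shared endpoints, which are distinct by assumption — hence empty. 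One must also handle the degenerate case where some $\alpha_{ij}$ is a single point, but that is easier, not harder.

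Once the non-crossing property is in hand, the rest is verbatim the case analysis of Proposition~\ref{prop:elhelyezkedes}: let $C=\conv(p_{12},\dots,p_{45})$ — or rather, since we are now topological, work with the cyclic order in which the six points appear on the boundary of their convex hull, which is still a well-defined combinatorial datum — and rule out $C$ being a hexagon, pentagon, or quadrilateral, each time by exhibiting two of the connecting arcs (or an arc and a ``Y''-shaped union of two arcs through a non-vertex point) that must cross, contradicting non-crossing. The planarity/crossing arguments there are topological in nature already (they invoke the Jordan curve theorem implicitly), so they transfer without change once ``segment'' is read as ``arc through the relevant body''. The triangle case again forces the third vertex to be $p_{23}$ with $p_{12},p_{13}$ the other two, and then $K_4$ — whose boundary, together with pieces of $\partial K_1,\partial K_2,\partial K_3$, bounds a region — is trapped in the internal region surrounded by $K_1,K_2,K_3$; here one invokes the Jordan-curve description of the internal region given just before the statement. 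I expect no difficulty in the case analysis; essentially all the work is in the topological non-crossing lemma and in making the internal-region conclusion precise for Jordan regions rather than convex bodies.
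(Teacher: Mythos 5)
Your proposal diverges from the paper's argument, and it contains a genuine gap at the step you dismiss as ``verbatim''. The topological non-crossing lemma itself is fine: with tripod arcs whose relative interiors lie in $\intof K_i$, any intersection point of $\gamma^i_{k,l}$ with $\gamma^j_{m,n}$ would lie in $K_i\cap K_j\subset\bd K_i\cap \bd K_j$, hence be a common endpoint, and the $p_{ij}$ are distinct. The problem is that the case analysis of Proposition~\ref{prop:elhelyezkedes} does not survive the replacement of segments by arcs. Every contradiction in that analysis has the form ``these two connecting curves join interleaved vertices of the convex polygon $C$, hence they cross''. For straight segments (or for curves confined to $C$) this is a valid Jordan-curve argument; for arbitrary arcs in the plane it is simply false, since an arc joining two interleaved points can detour around the outside of $C$. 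Your arcs are confined only to the Jordan regions $K_i$, which need not be contained in $C$, so none of the crossings you need are forced. This is not a repairable technicality: the hexagon configuration, which the convex proof rules out, can genuinely occur for Jordan regions with one of them in the internal region of the other three (take three long curved regions whose mutual touching points lie far from the small fourth region they surround); so any argument that ``rules out'' the hexagon case by forcing a crossing must be unsound. The convex hull of the six touching points is just not a topologically meaningful object here.

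For contrast, the paper's own (outlined) proof abandons the touching points entirely: assuming each of $K_1,K_2,K_3$ fails to lie in the internal region of the other three, it extracts from each an unbounded escape path $\gamma_i$ starting on $\bd K_i$ and avoiding the other three regions; the three paths cut the external region of $K_1\cup K_2\cup K_3$ into three parts, each meeting only two of $K_1,K_2,K_3$, whereas $K_4$ must lie in one part and still touch all three --- a contradiction. If you want to salvage your route, you would have to replace the convex hull by a combinatorial datum intrinsic to the arrangement (e.g.\ the rotation system of the planar tripod graph) rather than the convex position of the $p_{ij}$.
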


\begin{proof}
%[An outline of the proof of Proposition~\ref{prop:elhelyezkedesTop}]
 
We will call the image of the non-negative reals under an injective mapping 
into the plane an \emph{unbounded path} if it is an unbounded subset of the 
plane. The image of $0$ is the \emph{starting point} of the unbounded path.

Assume that $K_1$ is not in the internal region 
surrounded by $K_2,K_3,K_4$. Then there is a point $p_1$ on the boundary of 
$K_1$ that does not belong to either of the other three sets, and from which 
there is an unbounded path, $\gamma_1$ disjoint from the other three 
sets.
Similarly, if $K_2$ is not in the internal region surrounded by the other 
three, then there is a point $p_2$ on the boundary of 
$K_2$ that does not belong to either of the other three sets, and from which 
there is an unbounded path, $\gamma_2$ disjoint from the other three 
sets. And the same holds for $K_3$ yielding $p_3$ and $\gamma_3$.

We may assume that $\gamma_1,\gamma_2$ and $\gamma_3$ are pairwise disjoint. 
Now, $\gamma_1\cup \gamma_2 \cup \gamma_3$ partition the external 
region of $K_1,K_2,K_3$ into three parts. And $K_4$ is in one of these three 
parts. However, each part only intersects two of the sets $K_1,K_2,K_3$, which
 is a contradiction.
\end{proof}

\begin{rem}
Although Proposition \ref{prop:elhelyezkedesTop} has been observed in \cite{Bezdek1995}, their proof does not work for the special case when three of the bodies share a common point. Therefore we found it necessary to give a general proof.
\end{rem}

Observe that the conclusion of Proposition \ref{prop:k5sikbarajzolhato} follows from Proposition~\ref{prop:elhelyezkedesTop}.
Indeed, we may assume that $K_4$ is in the internal region $I$ surrounded by 
$K_1,K_2,K_3$.
Suppose that $n\geq 5$. Since $K_5$ touches $K_4$, it must also lie in $I$.
On the other hand, $K_5$ touches $K_1,K_2,K_3$ at points that do not belong to 
$K_4$. Now, $( \bd I)\setminus K_4$ is the union of three open arcs, and $K_4$ 
must have a point on at least two of these arcs to touch $K_1,K_2,K_3$. 
However, then the interior of $K_5$ intersects the interior of at least one set 
from $K_1,\ldots,K_4$, a contradiction.

\bibliographystyle{amsplain}
%\nocite*
\bibliography{biblio}
\end{document}